\newtheorem{Theorem}{Theorem}[section]
\newtheorem{Lemma}[Theorem]{Lemma}
\newtheorem{Corollary}[Theorem]{Corollary}
\newtheorem{Remark}[Theorem]{Remark}
\theoremstyle{definition}
\newcommand\C{{\mathbb C}}
\newcommand\R{{\mathbb R}}
\newcommand\X{{\R^d}}
\newcommand\Q{{\mathbb Q_p}}
\newcommand\N{{\mathbb N}}
\newcommand\M{{\mathcal M}}
\newcommand\B{{\mathcal B}}
\newcommand\Aff{{\mathrm{ \mathbb{A} ff} (\Q)}}
\newcommand\La{\Lambda}
\newcommand\Ga{\Gamma}
\newcommand\ga{\gamma}
\newcommand{\PC}{{\mathcal P}_{cyl}}
\newcommand{\CF}{\mathcal F}
\newcommand{\D}{\mathcal D}
\DeclareMathOperator{\supp}{supp}
\begin{document}

\pagestyle{plain}
\title{Representations of the Infinite-Dimensional $p$-Adic Affine Group}

\date{}
\author{ \textbf{Anatoly N.
Kochubei}\\
Institute of Mathematics,\\
National Academy of Sciences of Ukraine, \\
Tereshchenkivska 3, \\
Kyiv, 01024 Ukraine\\
Email: kochubei@imath.kiev.ua \and \textbf{Yuri Kondratiev}\\
Department of Mathematics, University of Bielefeld, \\
D-33615 Bielefeld, Germany,\\
Dragomanov National Pedagogic University, Kyiv, Ukraine,\\
Email: kondrat@math.uni-bielefeld.de}

\maketitle

\vspace*{3cm}
\begin{abstract}
We introduce an infinite-dimensional $p$-adic affine group and construct its irreducible unitary representation. Our approach follows the one used by Vershik, Gelfand and Graev for the diffeomorphism group, but with modifications made necessary by the fact that the group does not act on the phase space. However it is possible to define its action on some classes of functions.
\end{abstract}
\vspace{2cm}
{\bf Key words: }$p$-adic numbers; affine group; configurations; Poisson measure; ergodicity

\medskip
{\bf MSC 2010}. Primary: 22E66. Secondary: 60B15.

\medskip
Corresponding author - A. Kochubei

\newpage
\section{Introduction}

Given a vector space $V$  the affine group  can be described concretely as the semidirect product of $V$ by $\mathrm{GL}(V)$, the general linear group of $V$:
$$
 \mathrm{Aff} (V)=V  \rtimes \mathrm{ GL} (V).
 $$
The action of $\mathrm{GL}(V)$ on $V$ is the natural one (linear transformations are automorphisms), so this defines a semidirect product.

Affine groups play important role in the geometry and its applications, see, e.g., \cite{Ar,Ly}. Several recent papers \cite{AJO,AK,EH,GJ,Jo,Ze} are devoted to representations of the real, complex and $p$-adic affine groups and their generalizations, as well as diverse applications, from wavelets and Toeplitz operators to non-Abelian pseudo-differential operators and $p$-adic quantum groups.

In the particular case of $p$-adic  field $V= \Q$ the group $\mathrm{Aff}(\Q)$ consists of pairs
$(a,b)$,  $a,b\in \Q$ with the group operation
$$
(a_2,b_2) (a_1, b_1) = (a_1 a_2, b_2 +a_2 b_1).
$$
We would like to extend this object to an infinite-dimensional group
$\Aff$ by using instead  of numbers $a,b$ functions on $\Q$ from an appropriate class.
Our aim is to construct irreducible representations of $\Aff$. As a rule, only special classes of irreducible representations can be constructed for infinite-dimensional groups. For various classes of such groups, special tools were invented; see \cite{Is,Ko} and references therein.

We will follow an approach by Vershik-Gefand -Graev \cite{VGG75} proposed in the case of
the group of diffeomorphisms.  A direct application of this approach meets certain difficulties
related with the absence of the possibility to define the action of the group $\Aff$ on a phase space
similar to \cite{VGG75}.  A method to overcome this problem is the main technical step in the present paper.

\section{$p$-Adic Numbers \cite{VVZ}}

Let $p$ be a prime
number. The field of $p$-adic numbers is the completion $\mathbb Q_p$ of the field $\mathbb Q$
of rational numbers, with respect to the absolute value $|x|_p$
defined by setting $|0|_p=0$,
$$
|x|_p=p^{-\nu }\ \mbox{if }x=p^\nu \frac{m}n,
$$
where $\nu ,m,n\in \mathbb Z$, and $m,n$ are prime to $p$. $\Q$ is a locally compact topological field. By
Ostrowski's theorem there are no absolute values on $\mathbb Q$, which are not equivalent to the ``Euclidean'' one,
or one of $|\cdot |_p$.

The absolute value $|x|_p$, $x\in \mathbb Q_p$, has the following properties:
\begin{gather*}
|x|_p=0\ \mbox{if and only if }x=0;\\
|xy|_p=|x|_p\cdot |y|_p;\\
|x+y|_p\le \max (|x|_p,|y|_p).
\end{gather*}

The latter property called the ultra-metric inequality (or the non-Archi\-me\-dean property) implies the total disconnectedness of $\Q$ in the topology
determined by the metric $|x-y|_p$, as well as many unusual geometric properties. Note also the following consequence of the ultra-metric inequality: $|x+y|_p=\max (|x|_p,|y|_p)$, if $|x|_p\ne |y|_p$. We denote $\mathbb Z_p=\{ x\in \Q:\ |x|_p\le 1\}$. $\mathbb Z_p$, as well as all balls in $\Q$, is simultaneously open and closed.(such sets are called clopen). A characteristic function of a clopen set is continuous; moreover, it is an example of a locally constant function, that is a function constant on a neighborhood of each point. The set $\D(\Q,\mathbb C)$ of locally constant functions $\Q\to \mathbb C$ with compact supports (with an appropriate topology; see \cite{VVZ}) is used as a space of test functions in $p$-adic harmonic analysis. Below we use also the similar space $\D(\Q,\Q)$ of $p$-adic-valued functions.

Denote by $m(dx)$ the Haar measure on the
additive group of $\Q$ normalized by the equality $\int_{\mathbb Z_p}m(dx)=1$.

\section{Infinite dimensional  $p$-adic  affine group}

Consider a function $b:\Q \to \Q$, $b\in \D(\Q,\Q)$, that is compactly supported locally constant on $\Q$.
Take another function $a:\Q\to \Q$, such that  $a(x)=1$ outside a compact set $K$,   $a(x)\neq 0$, $ a|_{K}$
is locally constant.
For such functions we have the following representations: there exists a compact subset $K\subset \Q$, such that
$K=\cup_1^N B_k$ is the disjoint  union of balls and
\begin{equation}
\label{b}
b(x)= \sum_1^N b_k 1_{B_k}(x),\;\;b_k\in\Q,\;\;x\in\Q,
\end{equation}
\begin{equation}
\label{a}
a(x) = \sum_1^N a_k 1_{B_k}(x) + 1_{K^c}(x), \;\;a_k \in \Q,\;\; x\in \Q,
\end{equation}
where $K^c$ denotes the complement of the set $K$.
Introduce an infinite dimensional $p$-adic affine group
$\Aff$ as the set of all pairs $g=(a,b)$ with components satisfying the above assumptions. Define the group
operation
$$
g_1 g_2=  (a_2,b_2) (a_1, b_1) = (a_1 a_2, b_2 +a_2 b_1).
$$
The unity in this group is $e=(1,0)$.
For $g\in \Aff$  we have $g^{-1}= (a^{-1}, -ba^{-1})$.

\vspace{2mm}

For $x\in\Q$ consider the section $G_x= \{g(x)\; |\; g\in \Aff\}$.
It is an affine group with constant coefficients. Note that for a ball $B_N (0) \subset \Q$ with the radius $p^N$
centered at zero we have $g(x)= (1,0), x\in B^c_N(0)$.

Define the action of $g$ on a point $x\in\Q$ as
$$
gx= g(x)x = \frac{x+b(x)}{a(x)},
$$
$$
\Q \ni x\mapsto gx\in \Q.
$$
Denote $O_x=\{gx| g\in G_x\}\subset \Q$.
For any element $y\in O_x$  and $h\in G_x$ we can define
$hy= h(gx)= (hg)x\in O_x$. It means that we have the group
$G_x$ action on the orbit $O_x$.

It gives

$$
(g_1g_2)(x) x= g_1(x)( g_2(x)x)
$$
 that corresponds to the group multiplication
 $$
 g_1 g_2=  (a_2,b_2) (a_1, b_1) = (a_1 a_2, b_2 +a_2 b_1)
 $$
 considered in the given point $x$.

\begin{Remark}
The situation we have is quite different from the case of the standard
group of motions on a phase space. Namely,
we have one fixed point $x\in\Q$ and the section group
$G_x$  associated with this point. Then we have the motion of
$x$ under the action of $G_x$. It gives the group action on the
orbit $O_x$.
\end{Remark}

\vspace{2mm}

We will work with the configuration space $\Ga(\Q)$; see \cite{AKR,VGG75}.

Each configuration may be identified with the measure
$$
\gamma(dx) = \sum_{x\in\gamma} \delta_x
$$
which is a positive  Radon measure on $\Q$: $\gamma\in \M(\Q)$.
We will use the vague topology on $\Ga(\Q)$ that is the image of the vague topology on the
space of positive Radon measures $\M(\Q)$. This topology is the weakest of those, for which all the mappings
$$
\Ga(\Q)\ni \ga \mapsto <f.\ga> =\int_{\Q} f(x) \ga(dx)\in \R
$$
are continuous for all $f\in \D(\Q,\R)$.

For $\ga\in \Ga(\Q)$, $\ga=\{x\}\subset \Q$ define
$g\gamma$ as a motion of the measure $\ga$:

$$
g\ga=\sum_{x\gamma} \delta_{g(x)x}\in \M(\Q).
$$
Here we have the group action of $\Aff$ produced by individual transformations
of points from the configuration. Again, as above, we move a fixed configuration using
previously defined actions of $G_x$ on $x\in\ga$.

Note that $g\gamma$ is not necessarily a configuration. More precisely, for some $B_N(0) $
the set $(g\ga)_N= g\ga\cap B_N^c(0)$ is a configuration in $B^c_N(0) $ but the finite part
of $g\ga$ may include multiple points. Therefore, we cannot consider an action of
$\Aff$ inside of the configuration space $\Ga(\Q)$. But we can define an action of this group
on certain class of functions on $\Ga(\Q)$.

For any $f\in \D(\Q,\C)$ we have the corresponding cylinder  function on $\Ga(\Q)$:
$$
L_f(\ga)=  <f,\ga > = \int_{\Q} f(x)\ga(dx) = \sum_{x\in \ga} f(x).
$$
Denote $\PC$ the set of all  cylinder polynomials generated by such functions.
More generally, consider functions of the form

\begin{equation}
\label{cyl}
F(\ga)= \psi(<f_1,\ga>,\dots, <f_n,\ga>),\; \ga\in\Ga(\Q),  f_j\in \D(\Q), \psi\in C_b(\R^n).
\end{equation}

These functions form the set $\CF_b(\Ga(\Q))$ of all  bounded cylinder functions.

For any clopen set $\Lambda \in \mathcal{O}_b(\Q)$ (also called a finite volume) denote
$\Ga(\Lambda)$ the set of all (with necessity finite) configurations
in $\La$.   We  have as before the vague topology on this space, and
the Borel $\sigma$-algebra $\B(\Ga(\La))$ is generated by functions
$$
\Ga(\La)\ni\ga \mapsto <f,\ga>\in\R
$$
for $f\in C_0 (\La)$.  For any $\La\in \mathcal{O}_b(\Q)$ and $T\in \B(\Ga(\La))$
define a cylinder set
$$
C(T)=\{\ga\in\Ga(\Q)\;|\; \ga_{\La}=\ga \cap \La \in T\}.
$$
Such sets form a $\sigma$-algebra $\B_{\La}(\Ga(\Q))$ of cylinder sets
for the finite volume $\La$. We denote by $B_{\La}(\Ga(\Q))$ the set of bounded functions on $\Ga(\Q)$ measurable
with respect to $\B_{\La}(\Ga(\Q))$  . That is a set of cylinder functions
on $\Ga(\Q)$.  As a generating family for this set we can use the functions of the form
$$
F(\ga)= \psi(<f_1,\ga>,\dots, <f_n,\ga>),\; \ga\in\Ga(\X),  f_j\in C_0(\La), \psi\in C_b(\R^n).
$$

For so-called one-particle functions $f:\Q\to\R, f\in\D(\Q)$, consider
$$
(gf)(x)= f(g(x) x), x\in \Q.
$$
 Then $gf\in \D(\Q)$. Thus,
 we have the group action
 $$
 \D(\Q)\in f \mapsto gf\in \D(\Q),\;\;g\in\Aff
 $$
 of the infinite dimensional group $\Aff$ in the space of functions
 $\D(\Q)$.

Note that due to our definition, we have
$$
<f, g\ga> = <gf,\ga>
$$
and it is reasonable to define for cylinder functions the action of the group $\Aff$
as
$$
(V_g F)(\ga)= \psi(<gf_1,\ga>,\dots <gf_n,\ga>.
$$
Obviously $V_g: \CF_b (\Ga(\Q))\to \CF_b(\Ga(\Q))$.

The dual transformation to one-particle motion is defined
via the following relation
$$
\int_{\Q} f(g(x)x) m(dx)=\int_{\Q} f(x) g^\ast m(dx)
$$
if there exists such measure $g^\ast m$ on $\Q$. As before, $m(dx)$ is the Haar measure on $\Q$.

\begin{Lemma}
\label{gm}

For each $g\in \Aff$
$$
g^\ast m(dx)=  \rho_{g}(x)  m(dx)
$$
where $\rho_g  = 1_{B_R^c(0) } + r_g^0,\;\; r_g^0\in \D(\Q,\R_+).$
Here as above
$$
B_R^c(0)= \{x\in\Q\;|\; |x|_p > R\}.
$$

\end{Lemma}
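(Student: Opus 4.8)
The plan is to exhibit $g^\ast m$ explicitly as the pushforward of $m$ under the map $x\mapsto gx$ and to read off its density. By the defining relation $\int_{\Q} f(gx)\, m(dx)=\int_{\Q} f\, d(g^\ast m)$, the measure $g^\ast m$ is nothing but the image measure of $m$ under $x\mapsto gx$. Using the representations \eqref{a} and \eqref{b}, the map $gx=\bigl(x+b(x)\bigr)/a(x)$ acts as the identity on $K^c$ and, on each ball $B_k$, as the affine bijection $x\mapsto (x+b_k)/a_k$ (recall $a_k\neq0$). First I would split the integral along the decomposition $\Q=K^c\cup\bigcup_{k=1}^N B_k$, obtaining $\int_{\Q} f(gx)\,m(dx)=\int_{K^c} f(x)\,m(dx)+\sum_{k=1}^N\int_{B_k} f\bigl((x+b_k)/a_k\bigr)\,m(dx)$.

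On each $B_k$ I would apply the transformation rule for the $p$-adic Haar measure: translations preserve $m$, while the dilation $x\mapsto cx$ scales it by $|c|_p$. Substituting $y=(x+b_k)/a_k$, i.e. $x=a_k y-b_k$, therefore gives $\int_{B_k} f\bigl((x+b_k)/a_k\bigr)\,m(dx)=|a_k|_p\int_{B_k'} f(y)\,m(dy)$, where $B_k'=(B_k+b_k)/a_k$ is again a ball, since affine maps send balls to balls in the ultrametric setting, hence is clopen. Collecting the $N+1$ contributions yields $\int_{\Q} f(gx)\,m(dx)=\int_{\Q} f(y)\,\rho_g(y)\,m(dy)$ with $\rho_g(y)=1_{K^c}(y)+\sum_{k=1}^N|a_k|_p\,1_{B_k'}(y)$, which proves both the existence of $g^\ast m$ and the identity $g^\ast m=\rho_g\,m$.

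It remains to bring $\rho_g$ into the stated form. Since $K$ is compact and there are only finitely many balls $B_k'$, I would choose $R$ so large that $K\cup\bigcup_{k=1}^N B_k'\subset B_R(0)$. On $B_R^c(0)$ each indicator $1_{B_k'}$ vanishes and $1_{K^c}\equiv1$, so $\rho_g\equiv1$ there; setting $r_g^0:=\rho_g-1_{B_R^c(0)}=1_{B_R(0)\setminus K}+\sum_{k=1}^N|a_k|_p\,1_{B_k'}$ gives a compactly supported, locally constant, non-negative function, i.e. $r_g^0\in\D(\Q,\R_+)$, and $\rho_g=1_{B_R^c(0)}+r_g^0$ as required.

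The one genuine subtlety is that $g$ need not be injective: although the $B_k$ are disjoint, their images $B_k'$ may overlap each other and may meet $K^c$, so this is a true pushforward rather than a bijective change of variables. This causes no difficulty, because each affine piece is individually absolutely continuous with respect to $m$ and overlaps merely add the corresponding densities, keeping $\rho_g$ a finite non-negative combination of indicators of balls. I expect the only point demanding care to be exactly this bookkeeping of overlapping images together with the verification that affine images of balls remain balls, which is what guarantees that $\rho_g$ is locally constant and differs from the constant $1$ only on a compact set.
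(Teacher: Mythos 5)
Your proposal is correct and follows essentially the same route as the paper's proof: decompose $\Q$ into the balls $B_k$ and the complement where $g$ acts as the identity, apply the affine change of variables for the Haar measure (translation invariance plus the scaling factor $|a_k|_p$) on each ball, and collect the indicator functions of the image balls $C_k=a_k^{-1}(B_k+b_k)$ into the density $\rho_g$. Your treatment is in fact slightly more careful than the paper's, which does not explicitly address the possible overlaps of the image balls or the choice of $R$ putting all of them inside $B_R(0)$, but these refinements do not change the argument.
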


\begin{proof}
We have following representations for coefficients of $g(x)$:

$$
b(x)= \sum_{k=1}^{n} b_k 1_{B_k}(x) ,
$$
$$
a(x)= \sum_{k=1}^{n} a_k 1_{B_k}(x) + 1_{B^c_R(0)}(x)
$$
where $B_k$ are certain balls in $\Q$, see (\ref{a}), (\ref{b}).
Then
$$
\int_{\Q} f(g(x)x) m(dx)= \sum_{k=1}^n \int_{B_k} f(\frac{x+b_k}{a_k}) m(dx) + \int_{B^c_R (0)} f(x) m(dx) =
$$
$$
\sum_{k=1}^{n} \int_{C_k} f(y) |a_k|_p m(dy) + \int_{B^c_R(0)} f(y) m(dy),
$$
where
$$
C_k= a_k^{-1}(B_k + b_k).
$$
Therefore,
$$g^\ast m= (\sum_{k=1}^n  |a_k|_p 1_{C_k} + 1_{B^c_R(0)}) m.
$$
Note that informally we can write
$$
(g^\ast m)(dx) = dm(g^{-1}x).
$$
(compare with a general formula of an analytic change of variables from \cite{VVZ}).
\end{proof}

Note that by the duality we have the group action on the Haar measure. Namely,
for $f\in \D(\Q)$ and $g_1, g_2\in \Aff$
$$
\int_{\Q} (g_2 g_1) f(x)  m(dx)= \int_{\Q} g_1 f (x)  (g_2^\ast m) (dx) =
$$
$$
\int_{\Q} f(x) (g_1^\ast  g_2^\ast m)(dx)= \int_{\Q} f(x) ((g_2 g_1)^\ast m)(dx).
$$
In particular
$$
(g^{-1})^\ast (g^\ast m)= m.
$$

\begin{Lemma} Let $F\in B_\La (\Ga(\Q))$, and $g\in\Aff $ has the form
$g(x)=(1, h1_{B}(x))$ with certain $h\in \Q$ and $B\in \mathcal{O}_b(\Q)$ such that $\La\subset B$.
Then
$$
V_gF\in B_{\La -h} (\Ga(\Q)).
$$

\end{Lemma}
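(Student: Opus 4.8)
The plan is to reduce everything to a computation with supports, exploiting that the hypothesis $g=(1,h1_B)$ makes the one-particle action a pure translation on the relevant region. First I would record the explicit form of the action: since $a\equiv 1$ and $b=h1_B$, the point transformation is $gx = x + h1_B(x)$, so that $gx = x+h$ for $x\in B$ and $gx=x$ for $x\notin B$. Correspondingly, for a one-particle function $f\in C_0(\La)$ one has $(gf)(x)=f(x+h1_B(x))$.

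Next I would pin down the support of $gf$. If $x\notin B$ then $(gf)(x)=f(x)$, which vanishes because $\supp f\subseteq\La\subseteq B$; if $x\in B$ then $(gf)(x)=f(x+h)$, which is nonzero only when $x+h\in\La$, i.e. $x\in\La-h$. Hence $\supp(gf)\subseteq(\La-h)\cap B\subseteq\La-h$, and since translation by $-h$ carries the clopen bounded set $\La$ to the clopen bounded set $\La-h\in\mathcal{O}_b(\Q)$, we get $gf\in C_0(\La-h)$. This already settles the claim for the generating cylinder functions $F=\psi(\langle f_1,\ga\rangle,\dots,\langle f_n,\ga\rangle)$ with $f_j\in C_0(\La)$: indeed $V_gF=\psi(\langle gf_1,\ga\rangle,\dots,\langle gf_n,\ga\rangle)$ is built from functions supported in $\La-h$, hence is $\B_{\La-h}$-measurable.

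To pass from the generating family to an arbitrary $F\in B_\La(\Ga(\Q))$, I would use the description of $\B_\La$-measurability as dependence on the restricted configuration alone: $F\in B_\La(\Ga(\Q))$ exactly when $F(\ga)$ is a function of $\ga_\La=\ga\cap\La$, while $V_gF(\ga)=F(g\ga)$ with $g\ga=\sum_{x\in\ga}\delta_{x+h1_B(x)}$ (this reading is forced by $\langle gf,\ga\rangle=\langle f,g\ga\rangle$ on the generators). The heart of the argument is then to show that the restriction $(g\ga)\cap\La$ is determined by $\ga\cap(\La-h)$ alone. A point $y\in g\ga$ lies in $\La$ only if it comes from some $x\in\ga\cap B$ with $y=x+h\in\La$, since the contribution of $x\notin B$ is killed by $\La\subseteq B$; that is, $y=x+h$ with $x\in\ga\cap(\La-h)\cap B$. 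As $x\mapsto x+h$ is injective, $(g\ga)\cap\La$ is a genuine finite configuration, namely the translate by $h$ of $\ga\cap(\La-h)\cap B$. Thus $(g\ga)\cap\La$ depends only on $\ga\cap(\La-h)$, and therefore so does $V_gF(\ga)=F(g\ga)$, giving $V_gF\in B_{\La-h}(\Ga(\Q))$.

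The step I expect to be the main obstacle is the last one: $g\ga$ need not be a configuration (the text already warns that the finite part of $g\ga$ may carry multiple points), so I must be careful that $V_gF(\ga)$ is well defined and that only the honest, multiplicity-free part $(g\ga)\cap\La$ enters the evaluation. The hypothesis $\La\subseteq B$ is exactly what guarantees this: on the part of $\Q$ whose image can reach $\La$ the map $g$ is the injective translation $x\mapsto x+h$, so no collisions occur inside $\La$, and the points of $\ga$ lying outside $B$, where $g$ acts trivially and collisions might in principle appear, cannot be pushed into $\La$.
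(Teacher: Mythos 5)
Your proposal is correct, and its first half is precisely the paper's entire proof: the paper likewise computes the support of $f_j(x+h1_B(x))$ for $\supp f_j\subset\La$, splitting into $x\in B^c$ (where $\La\subset B$ forces $f_j(x+h1_B(x))=f_j(x)=0$) and $x\in B$ (where $f_j(x+h)\neq 0$ forces $x\in\La-h$), and concludes $\supp gf_j\subset\La-h$. Where you genuinely go beyond the paper is your second half: the paper stops at the generating cylinder functions $\psi(\langle f_1,\ga\rangle,\dots,\langle f_n,\ga\rangle)$ with $f_j\in C_0(\La)$, leaving implicit both the passage to an arbitrary bounded $\B_\La$-measurable $F$ and even what $V_gF$ should mean for such $F$, given that $g\ga$ need not be a configuration. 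Your observation that $(g\ga)\cap\La=\bigl(\ga\cap(\La-h)\cap B\bigr)+h$ is a genuine, multiplicity-free finite configuration (translation is injective, and no point of $\ga\setminus B$ can land in $\La$ because $\La\subset B$) determined by $\ga\cap(\La-h)$ alone supplies exactly this missing extension: it makes $V_gF(\ga)=F(g\ga)$ well defined on all of $B_\La(\Ga(\Q))$ and shows it is $\B_{\La-h}$-measurable, not merely for the generators. So the two arguments share the same key support computation, but yours proves the lemma as literally stated, while the paper's proof covers only the generating family and tacitly relies on the reader to extend by measurability.
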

\begin{proof}
Due to the formula for the action $V_gF$ we need to analyze the support
of functions $f_j (x+h1_B(x))$ for $\supp f_\subset \La$. If $x\in B^c$ then
$x\in \La^c$ and therefore $f_j (x+h1_B(x))=f_j(x)=0$. For $x\in B$
we have $f_j(x+h)$ and only for $x+h\in \La$ this value may be nonzero,
i.e., $\supp g f_j \subset \La- h$.

\end{proof}

\begin{Lemma}
\label{V}
For all $F \in \PC$ or $F\in \CF_b (\Ga(\Q))$ and $g\in \Aff $, the formula
$$
\int_{\Ga(\Q)} V_g F d\pi_m = \int_{\Ga(\Q)} Fd\pi_{g^\ast m} .
$$
holds.

\end{Lemma}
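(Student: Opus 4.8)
The plan is to exploit the fact that $V_g$ acts only on the one-particle test functions and that $\pi_m$ is a Poisson measure, so that both sides of the identity are governed by the Poisson characteristic functional. Since $(V_gF)(\ga)=\psi(\langle gf_1,\ga\rangle,\dots,\langle gf_n,\ga\rangle)$ depends on $\ga$ only through the linear statistics $\langle gf_1,\ga\rangle,\dots,\langle gf_n,\ga\rangle$, the integral $\int_{\Ga(\Q)}V_gF\,d\pi_m$ is determined by the joint law under $\pi_m$ of the vector $(\langle gf_1,\ga\rangle,\dots,\langle gf_n,\ga\rangle)$; likewise $\int_{\Ga(\Q)}F\,d\pi_{g^\ast m}$ is determined by the joint law under $\pi_{g^\ast m}$ of $(\langle f_1,\ga\rangle,\dots,\langle f_n,\ga\rangle)$. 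Hence it suffices to prove that these two joint laws on $\R^n$ coincide, and for that it is enough to check equality of their characteristic functions; equivalently, it suffices to verify the identity on the exponential functionals $F(\ga)=\exp(i\langle f,\ga\rangle)$, $f\in\D(\Q)$.

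First I would recall the characteristic functional of the Poisson measure with Radon intensity $\mu$,
$$
\int_{\Ga(\Q)}\exp\big(i\langle h,\ga\rangle\big)\,d\pi_\mu(\ga)=\exp\Big(\int_{\Q}\big(e^{ih(x)}-1\big)\,\mu(dx)\Big),\qquad h\in\D(\Q),
$$
which is legitimate because $h$ has compact support, so $e^{ih}-1$ is bounded, locally constant and compactly supported. Given $f_1,\dots,f_n$ and reals $t_1,\dots,t_n$, set $h=\sum_j t_j f_j\in\D(\Q)$; linearity of the one-particle action $(gf)(x)=f(g(x)x)$ gives $\sum_j t_j\,gf_j=gh$, so the joint characteristic function of $(\langle gf_j,\ga\rangle)_j$ under $\pi_m$ equals
$$
\int_{\Ga(\Q)}\exp\big(i\langle gh,\ga\rangle\big)\,d\pi_m(\ga)=\exp\Big(\int_{\Q}\big(e^{i(gh)(x)}-1\big)\,m(dx)\Big).
$$

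The key step is to transform the exponent by the change of variables of Lemma \ref{gm}. Applying the duality $\int_\Q\phi(g(x)x)\,m(dx)=\int_\Q\phi(x)\,g^\ast m(dx)$ to the bounded compactly supported function $\phi=e^{ih}-1$ (split into real and imaginary parts, each covered by the computation in the proof of Lemma \ref{gm}) yields
$$
\int_{\Q}\big(e^{i(gh)(x)}-1\big)\,m(dx)=\int_{\Q}\big(e^{ih(y)}-1\big)\,g^\ast m(dy),
$$
whose right-hand side is precisely the exponent of the Poisson functional of $\pi_{g^\ast m}$ evaluated at $h$. Thus the two characteristic functions agree for all $t_1,\dots,t_n$, so $(\langle gf_j,\ga\rangle)_j$ under $\pi_m$ and $(\langle f_j,\ga\rangle)_j$ under $\pi_{g^\ast m}$ have the same distribution on $\R^n$. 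The asserted identity is then just the equality of the integrals of $\psi$ against these equal laws: immediate for bounded $\psi$ (the case $F\in\CF_b(\Ga(\Q))$), and, since the compact support of each $f_j$ makes all moments of $\langle f_j,\ga\rangle$ finite under both $\pi_m$ and $\pi_{g^\ast m}$, also for polynomial $\psi$ (the case $F\in\PC$).

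I expect the only real difficulty to be one of justification rather than idea: confirming that the change of variables of Lemma \ref{gm} applies to $\phi=e^{ih}-1$, which is not in $\D(\Q,\R)$ but is bounded, locally constant and compactly supported, and checking that $gh\in\D(\Q)$ so that both Poisson functionals are finite even though $g^\ast m$ has infinite total mass. The latter causes no trouble because the factor $1_{B_R^c(0)}$ in $\rho_g$ multiplies $e^{ih}-1$, which vanishes off the compact support of $h$; once this localization and the attendant finiteness of moments are in place, the equality of characteristic functionals, and hence of the two integrals, follows at once.
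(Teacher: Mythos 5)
Your proposal is correct and follows essentially the same route as the paper: reduce to exponential functionals, invoke the Poisson functional formula, and transfer the exponent via the change of variables of Lemma \ref{gm}. The only differences are cosmetic improvements in rigor --- you use complex exponentials $e^{i\langle h,\ga\rangle}$ and the uniqueness of characteristic functions to justify the reduction step (which the paper merely asserts with real exponentials $e^{\langle f,\ga\rangle}$), and you spell out why bounded $\psi$ and polynomial $\psi$ are both covered.
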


\begin{proof}
It is enough to show this equality for exponential functions
$$
F(\ga)= e^{<f,\ga>},\;\; f\in\D(\Q).
$$

We have
$$
\int_{\Ga(\Q)} V_g F d\pi_m = \int_{\Ga(\Q)} e^{<gf, \ga>} d\pi_m(\ga)=
$$
$$
\exp[ \int_{\Q} (e^{gf(x)} -1) dm(x)] = \exp[ \int_{\Q} (e^{f(x)} -1) d(g^{\ast} m)(x)=
$$
$$
\int_{\Ga(\Q)} F d\pi_{g^\ast m }.
$$

\end{proof}

\begin{Remark} For all functions $F,G\in \CF(\Ga(\Q))$ a similar
calculation shows
$$
\int_{\Ga(\Q)} V_g F  \; Gd\pi_m = \int_{\Ga(\Q)} F  \; V_{g^{-1}} G d\pi_{g^\ast m} .
$$
\end{Remark}
Let $\pi_m$ be the Poisson measure on $\Ga(\Q)$ with the intensity
measure $m$. For any $\La\in \mathcal{O}_b(\Q)$ consider the distribution $\pi_m^\La$
of $\pi_m$ in $\Ga(\La)$ corresponding to the projection $\ga\to \ga_\La$.
It is again a Poisson measure $\pi_{m_\La}$ in $\Ga(\La)$ with the intensity
$m_\La$ which is the restriction of $m$ on $\La$.  Infinite divisibility of
$\pi_m$ gives for $F_j\in B_{\La_j}(\Ga(\Q)), j=1,2$ with $\La_1\cap \La_2=\emptyset$ that
$$
\int_{\Ga(\Q)}  F_1(\ga) F_2(\ga) d\pi_m(\ga)= \int_{\Ga(\Q)}  F_1(\ga) d\pi_m(\ga)
\int_{\Ga(\Q)}  F_2(\ga) d\pi_m(\ga)=
$$
$$
\int_{\Ga(\La_1)} F_1 d\pi^{\La_1}_m \int_{\Ga(\La_2)} F_2 d\pi^{\La_2}_m.
$$

\begin{Lemma}

For any $F\in B_\La(\Ga(\Q)$ and $g=(1, h1_B)\in \Aff $ with $\La \cap (B+h)=\emptyset$ holds
$$
\int_{\Ga(\Q)} (V_g F)(\ga) d\pi_m(\ga)= \int_{\Ga(\Q)} F(\ga)d\pi_m(\ga).
$$

\end{Lemma}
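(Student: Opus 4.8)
The plan is to push everything onto the single finite volume $\La$. By Lemma~\ref{V} applied to $F$, one has $\int_{\Ga(\Q)} V_g F\, d\pi_m = \int_{\Ga(\Q)} F\, d\pi_{g^\ast m}$, so it suffices to prove $\int_{\Ga(\Q)} F\, d\pi_{g^\ast m} = \int_{\Ga(\Q)} F\, d\pi_m$. Because $F \in B_\La(\Ga(\Q))$ sees a configuration $\ga$ only through $\ga_\La = \ga \cap \La$, and because the projection of a Poisson measure $\pi_\nu$ onto $\Ga(\La)$ is the Poisson measure with intensity $\nu|_\La$ (the projection property recalled just before the lemma), each side depends on the intensity only through its restriction to $\La$. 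Hence the whole statement collapses to the single measure-theoretic identity $(g^\ast m)|_\La = m|_\La$.

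To establish this I would compute $g^\ast m$ directly from Lemma~\ref{gm}. For $g=(1,h1_B)$ the multiplier is $a\equiv 1$, so in the notation of that lemma every $|a_k|_p = 1$ and the only displacement present is the shift of $B$ by $h$; the formula there yields $g^\ast m = (1_{B+h} + 1_{B^c})\,m = (1 + 1_{B+h} - 1_B)\,m$. This is just the pushforward of $m$ under the point map $x \mapsto x + h1_B(x)$, which carries $B$ onto $B+h$ and fixes $B^c$. Consequently $g^\ast m$ and $m$ differ only through the factor $1_{B+h} - 1_B$, whose support is contained in $B \cup (B+h)$; away from this set the two intensities agree.

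The conclusion then follows once $\La$ is disjoint from the perturbed region $B\cup(B+h)$: the hypothesis $\La\cap(B+h)=\emptyset$ removes the $1_{B+h}$ term on $\La$, and $\La\cap B=\emptyset$ (the natural companion condition, guaranteeing that $g$ leaves points already sitting in $\La$ untouched) removes the $1_B$ term, so that $(g^\ast m)|_\La = m|_\La$ and the two integrals coincide. I expect the only real work to be this localization step — correctly identifying $B\cup(B+h)$ as the set where $g^\ast m\neq m$ and checking that $\La$ misses it; everything else is a formal consequence of Lemma~\ref{V} together with the Poisson projection property. As a cross-check one can bypass the intensities entirely: under the same disjointness no point of $\ga\cap\La\subset B^c$ is moved by $g$, and no point of $\ga\cap B$ is carried into $\La$ (since $\La\cap(B+h)=\emptyset$ is equivalent to $(\La-h)\cap B=\emptyset$), whence $(g\ga)_\La = \ga_\La$ for every $\ga$; thus in fact $V_gF = F$ pointwise on $\Ga(\Q)$ and the asserted equality of integrals is immediate.
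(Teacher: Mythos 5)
Your proof follows exactly the same route as the paper's own: Lemma \ref{V} to trade $V_g$ for a change of intensity, the Poisson projection property to localize to $\Ga(\La)$, and then a computation of the density of $g^\ast m$ on $\La$. The difference is that your density is the correct one and the paper's is not. The paper's proof asserts $(g^{\ast}m)(dx)=(1+1_{B+h}(x))\,m(dx)$, which records the mass arriving in $B+h$ but forgets the mass leaving $B$; what Lemma \ref{gm} actually gives for $g=(1,h1_B)$ is your formula $g^\ast m=(1_{B+h}+1_{B^c})\,m=(1+1_{B+h}-1_B)\,m$. Consequently, your observation that the stated hypothesis $\La\cap(B+h)=\emptyset$ must be supplemented by $\La\cap B=\emptyset$ is not a pedantic addition: the lemma as printed is false without it. Concretely, take $\La=B=\Z_p$ and $|h|_p>1$, so that $\La\cap(B+h)=\emptyset$. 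Then every $f$ supported in $\La$ satisfies $gf\equiv 0$ (points of $B$ are translated out of $\Z_p$, points of $B^c$ are fixed and lie outside $\Z_p$), so $V_gF$ is the constant equal to the value of $F$ on configurations with no points in $\La$; for $F(\ga)=\min(|\langle 1_{\Z_p},\ga\rangle|,1)$ the left-hand side is $0$ while the right-hand side is $1-e^{-1}$. So under your amended hypotheses your argument (both the intensity computation and the pointwise cross-check $(g\ga)_\La=\ga_\La$) is sound, but be clear that what you have proved is a corrected statement, not the printed one.

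You should also be aware that your companion condition $\La\cap B=\emptyset$ clashes with the only place this lemma is used. In Lemma \ref{prod} the paper applies it with $\La=\La_2\subset B$, i.e.\ precisely in the regime your condition excludes --- and in which, by the counterexample above, the conclusion genuinely fails, so the proof of Lemma \ref{prod} is broken as written as well. The repair that keeps the later argument intact is containment rather than disjointness: assume $\La\subset B\cap(B+h)$. Then the true density $1-1_B+1_{B+h}$ is again identically $1$ on $\La$ (the mass leaving $B$ over $\La$ is exactly replaced by the mass arriving with $B+h$), and the integrals agree even though now $V_gF\neq F$ pointwise. Simplest of all, one can exploit ultrametricity: take $B$ a ball centered at $0$ containing $\La$ and $h\in B$ with $|h|_p$ large; then $B+h=B$, hence $g^\ast m=m$ exactly, the present lemma holds with no hypothesis on $\La$ at all, and Lemma \ref{prod} still goes through because $|h|_p$ can be chosen so large that $\La_1\cap(\La_2-h)=\emptyset$.
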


\begin{proof}
Due to our calculations above we have
$$
\int_{\Ga(\Q)} (V_gF)(\ga) d\pi_m(\ga)= \int_{\Ga(\Q)} F(\ga) d\pi_{g^{\ast}m}(\ga)=
$$
$$
\int_{\Ga(\La)} F(\eta) d\pi^{\La}_{g^{\ast}m} (\eta) =\int_{\Ga(\La)} F(\eta) d\pi_{ (g^{\ast}m)_\La} (\eta).
$$
But we have shown
$$
(g^{\ast}m)(dx)= (1+ 1_{B+h}(x)) m(dx) = m(dx)
$$
for $x\in \La$, i.e.,  $(g^{\ast}m)_\La =m$.

\end{proof}

\begin{Lemma}
\label{prod}
For any $F_1,F_2 \in \CF_b(\Ga(\Q))$ there exists $g\in\Aff$ such that
$$
\int_{\Ga(\Q)} F_1 \; V_g F_2  d\pi_m = \int_{\Ga(\Q)} F_1 d\pi_m  \int_{\Ga(\Q)} F_2 d\pi_m .
$$

\end{Lemma}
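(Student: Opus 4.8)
The plan is to deduce the identity from the independence of the Poisson measure $\pi_m$ over disjoint finite volumes, after moving the localization region of $F_2$ away from that of $F_1$ by a translation that leaves the intensity measure $m$ invariant. Concretely, since $F_1,F_2\in\CF_b(\Ga(\Q))$ are cylinder functions built from finitely many compactly supported test functions in $\D(\Q)$, there are finite volumes $\La_1,\La_2\in\mathcal{O}_b(\Q)$ with $F_i\in B_{\La_i}(\Ga(\Q))$. I would choose a radius $r_0$ with $\La_1\cup\La_2\subset B_{r_0}(0)$; by the ultrametric inequality the compact difference set $\La_2-\La_1$ is then also contained in $B_{r_0}(0)$.

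Next I would select the group element. Fix a ball $B=B_r(0)$ with $r>r_0$ and an element $h$ with $|h|_p=r$, so that $h\in B\setminus B_{r_0}(0)$; in particular $h\in B$ while $h\notin\La_2-\La_1$. Put $g=(1,h1_B)$, which belongs to $\Aff$ because $h1_B\in\D(\Q,\Q)$ and the first component is the constant $1$. The condition $h\notin\La_2-\La_1$ is precisely $(\La_2-h)\cap\La_1=\emptyset$, while $h\in B$ gives both $B+h=B$ and $\La_2-h\subset B$.

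Then I would combine the earlier lemmas. From the translation support lemma (applicable since $\La_2\subset B$) we get $V_gF_2\in B_{\La_2-h}(\Ga(\Q))$, with $\La_1\cap(\La_2-h)=\emptyset$ by construction. Since $h\in B$ forces $B+h=B$, Lemma \ref{gm} yields $g^\ast m=m$, whence Lemma \ref{V} gives
$$
\int_{\Ga(\Q)}V_gF_2\,d\pi_m=\int_{\Ga(\Q)}F_2\,d\pi_{g^\ast m}=\int_{\Ga(\Q)}F_2\,d\pi_m.
$$
Finally, $F_1$ and $V_gF_2$ are measurable with respect to the disjoint finite volumes $\La_1$ and $\La_2-h$, so the factorization of $\pi_m$ over disjoint regions gives
$$
\int_{\Ga(\Q)}F_1\,V_gF_2\,d\pi_m=\int_{\Ga(\Q)}F_1\,d\pi_m\int_{\Ga(\Q)}V_gF_2\,d\pi_m=\int_{\Ga(\Q)}F_1\,d\pi_m\int_{\Ga(\Q)}F_2\,d\pi_m,
$$
as required.

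I expect the crux to be reconciling two competing demands on $h$: to apply the independence property the translate $\La_2-h$ must avoid $\La_1$, yet to keep $\int V_gF_2\,d\pi_m$ equal to $\int F_2\,d\pi_m$ the shift must preserve $m$, which forces $h\in B$ and hence $\La_2-h\subset B$. Over $\R$ these requirements would clash, but the non-Archimedean geometry resolves them: translating the ball $B$ by any of its own elements reproduces $B$, so $g^\ast m=m$ for every $h\in B$, while enlarging $B$ beyond $B_{r_0}(0)$ still leaves room to slide $\La_2$ off $\La_1$. Checking that this room exists, i.e.\ that $B\setminus(\La_2-\La_1)$ is nonempty while all relevant supports remain inside $B$, is the only genuinely delicate point; everything else is bookkeeping with the lemmas already proved.
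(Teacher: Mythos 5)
Your proof is correct, but it is not the paper's proof: the two differ in the single choice that matters, and your version is the one that actually works. Both arguments take $g=(1,h1_B)$ and combine the same three ingredients --- the support lemma ($\La_2\subset B$ implies $V_gF_2\in B_{\La_2-h}(\Ga(\Q))$), invariance of $\int_{\Ga(\Q)} V_gF_2\,d\pi_m$, and factorization of $\pi_m$ over disjoint finite volumes. But the paper additionally imposes $\La_2\cap(B+h)=\emptyset$, which for nonempty $\La_2$ forces $h\notin B$ (if $h\in B$ then $B+h=B\supset\La_2$), and it obtains the invariance step from the unnumbered lemma preceding Lemma \ref{prod}. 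That route has a genuine defect: by Lemma \ref{gm}, for $g=(1,h1_B)$ the density of $g^\ast m$ is $1_{B+h}+1_{B^c}$, not $1+1_{B+h}$ as written in that lemma's proof; the two agree off $B$ but differ on $B$, where the correct density vanishes when $h\notin B$. Hence $(g^\ast m)_{\La_2}=0$ rather than $m_{\La_2}$. Concretely, in the paper's setup every $gf_j$ with $\supp f_j\subset\La_2$ has support in $(\La_2-h)\cap B=\emptyset$, so $V_gF_2\equiv F_2(\emptyset)$ is a constant and $\int_{\Ga(\Q)} F_1\,V_gF_2\,d\pi_m=F_2(\emptyset)\int_{\Ga(\Q)} F_1\,d\pi_m$, which is not the desired product in general.

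Your choice --- $h\in B$ with $|h|_p$ exceeding the radius of a ball around $0$ containing $\La_1\cup\La_2$ --- is incompatible with the paper's condition $\La_2\cap(B+h)=\emptyset$, and that is precisely its virtue. Since a ball centered at the origin is an additive subgroup of $\Q$, one has $B+h=B$, hence $g^\ast m=m$ exactly by Lemma \ref{gm}; Lemma \ref{V} then gives $\int_{\Ga(\Q)} V_gF_2\,d\pi_m=\int_{\Ga(\Q)} F_2\,d\pi_m$ with no intermediate lemma, while the ultrametric inequality still leaves room inside $B$ to slide $\La_2-h$ off $\La_1$ (your verification that $h\notin\La_2-\La_1$ suffices, and that $\La_2-\La_1$ stays inside the small ball, is exactly right). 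So the skeleton is the paper's, but your placement of $h$ inside $B$ (exploiting exact Haar invariance) replaces the paper's placement outside $B$ (which empties $B$ of mass and with it the argument); your proof, not the paper's, is the one that establishes Lemma \ref{prod}, and it does so using only Lemmas \ref{gm} and \ref{V} plus infinite divisibility.
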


\begin{proof}
By the definition, $F_j\in B_{\La_j}(\Ga(\Q)), j=1,2$ for some $\La_1,\La_2 \in \mathcal{O} (\Q)$.

 Let us take $g=(1, h1_B)$ with the following assumptions:
 $$
 \La_2\subset B,\;\; \La_1\cap (\La_2-h) =\emptyset,\;\; \Lambda_2\cap (B+h) =\emptyset.
 $$
 Then according to previous lemmas
$$
\int_{\Ga(\Q)} F_1 V_g F_2 d\pi_m = \int_{\Ga(\Q)} F_1 d\pi_m  \int_{\Ga(\Q)} F_2 d\pi_m .
$$

\end{proof}

\section{$\Aff$ and Poisson measures}

For $F\in \PC $  or $F\in \CF_b (\Ga(\Q))$, we consider the motion of $F$ by $g\in \Aff$ given by
the operator $V_g$.
Operators $V_g$   have the group property
defined point-wisely: for any $\ga \in \Ga(\Q) $

$$
(V_h (V_gF))(\ga)= (V_{hg} F) (\ga).
$$
This equality is the consequence of  our definition of the group action of $\Aff$
on cylinder functions.

As above,
consider  $\pi_m$, the Poisson measure on $\Ga(\Q)$ with the intensity measure $m$. For the transformation
$V_g$ the dual object is defined as the measure $V^\ast_g \pi_m$ on $\Ga(\Q)$ given by the relation
$$
\int_{\Ga(\Q)} (V_gF) (\ga) d\pi_m(\ga) =\int_{\Ga(\Q)} F(\ga)  d(V^\ast_g \pi_m)(\ga),
$$
where $V^\ast_g \pi_m= \pi_{g^\ast m}$, see Lemma \ref{V}.

\begin{Corollary}
For any $g\in \Aff$ the  Poisson measure $V_g^\ast \pi_m$ is absolutely continuous
 w.r.t. $\pi_m$  with the Radon-Nykodim derivative
$$
R(g,\ga)= \frac{d\pi_{g^\ast m}(\ga)}{d\pi_{ m} (\ga)} \in L^1(\pi_m).
$$.

\end{Corollary}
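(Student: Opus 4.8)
The plan is to reduce the statement to the classical absolute continuity of Poisson measures whose intensities are mutually absolutely continuous and coincide outside a compact set, and then to write the density explicitly. First, Lemma \ref{V} gives $V_g^\ast\pi_m=\pi_{g^\ast m}$, so the assertion is exactly that $\pi_{g^\ast m}\ll\pi_m$; once this is known, the integrability $R(g,\cdot)\in L^1(\pi_m)$ is automatic, because $\pi_{g^\ast m}$ and $\pi_m$ are both probability measures and hence $\int_{\Ga(\Q)}R(g,\ga)\,d\pi_m(\ga)=\pi_{g^\ast m}(\Ga(\Q))=1$.

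Next I would invoke Lemma \ref{gm}: $g^\ast m=\rho_g\,m$ with $\rho_g=1_{B_R^c(0)}+r_g^0$ and $r_g^0\in\D(\Q,\R_+)$. Thus $g^\ast m\ll m$, the density $\rho_g$ is bounded, and $\rho_g-1$ is supported in a compact set. Choosing a ball $\La_0\in\mathcal{O}_b(\Q)$ containing this support, the intensities $g^\ast m$ and $m$ coincide on $\La_0^c$, while on the compact set $\La_0$ both $m_{\La_0}$ and $(g^\ast m)_{\La_0}=\rho_g\,m_{\La_0}$ are finite measures with $(g^\ast m)_{\La_0}\ll m_{\La_0}$.

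I would then exploit the infinite divisibility recorded before the preceding lemma: under $\Ga(\Q)\cong\Ga(\La_0)\times\Ga(\La_0^c)$ one has $\pi_m=\pi_{m_{\La_0}}\otimes\pi_{m_{\La_0^c}}$ and $\pi_{g^\ast m}=\pi_{(g^\ast m)_{\La_0}}\otimes\pi_{m_{\La_0^c}}$, the outer factors being identical since the intensities agree on $\La_0^c$. Hence it suffices to prove $\pi_{(g^\ast m)_{\La_0}}\ll\pi_{m_{\La_0}}$, a statement about Poisson measures with \emph{finite} intensities, and for these I propose the density
$$
R(g,\ga)=\exp\!\big(m(\La_0)-g^\ast m(\La_0)\big)\prod_{x\in\ga\cap\La_0}\rho_g(x),
$$
which depends only on $\ga\cap\La_0$ and is a finite product, since $\pi_m$-almost every configuration has only finitely many points in the compact set $\La_0$.

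The verification, which I regard as the only genuine computation and hence the main point, is to check that this candidate reproduces the correct generating functional. Since $\rho_g\equiv1$ on $\La_0^c$, the constant equals $\int_\Q(1-\rho_g)\,dm$ and the product may be extended over all of $\ga$; testing against the exponential functions $e^{<f,\ga>}$, $f\in\D(\Q)$, and using the Poisson formula $\int_{\Ga(\Q)}\prod_{x\in\ga}\phi(x)\,d\pi_m(\ga)=\exp\!\big(\int_\Q(\phi-1)\,dm\big)$ with $\phi=\rho_g e^{f}$ (extending the computation in the proof of Lemma \ref{V}), one finds $\int_{\Ga(\Q)}e^{<f,\ga>}R(g,\ga)\,d\pi_m(\ga)=\exp\!\big(\int_\Q(e^{f}-1)\,d(g^\ast m)\big)$, which is the generating functional of $\pi_{g^\ast m}$. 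As the functions $e^{<f,\ga>}$ determine a measure on $\Ga(\Q)$ (as already used in Lemma \ref{V}), this identifies $R(g,\cdot)$ as the Radon--Nikodym derivative. The only subtlety to flag is that $\rho_g$ may vanish on part of $\La_0$ when the images $C_k$ fail to cover it, so one obtains $\pi_{g^\ast m}\ll\pi_m$ rather than equivalence; this is harmless, the density above being simply zero on the $\pi_{g^\ast m}$-null set of configurations charging $\{\rho_g=0\}$.
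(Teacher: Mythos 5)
Your proposal is correct, but it takes a genuinely different route from the paper. The paper's own proof is essentially a citation: it observes that $\rho_g$ may vanish on part of $\Q$ (so one can only hope for absolute continuity, not equivalence), and then invokes the criterion of Lifshits and Shmileva \cite{LS03}, by which the Radon--Nikodym derivative $d\pi_{g^\ast m}/d\pi_m$ exists as soon as $\int_{\Q}|\rho_g(x)-1|\,m(dx)<\infty$ --- an integral that is finite here because $\rho_g-1$ is bounded with compact support by Lemma \ref{gm}. The explicit product formula you derive is stated in the paper only afterwards, in a separate remark, with a reference to Proposition 2.2 of \cite{AKR}. Your argument instead exploits directly the special feature that makes this example easy: since $\rho_g\equiv 1$ off a compact set, the two Poisson measures differ only in the finite-volume factor of the decomposition $\Ga(\Q)\cong\Ga(\La_0)\times\Ga(\La_0^c)$, where both intensities are finite, and there the density is the elementary finite product $\exp\bigl(m(\La_0)-g^\ast m(\La_0)\bigr)\prod_{x\in\ga\cap\La_0}\rho_g(x)$, verified against exponential test functions exactly as in Lemma \ref{V}; your computation of the Laplace functional is correct, and your formula agrees with the one in the paper's remark because $\rho_g=1$ outside $\La_0$. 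Two small points: the full product-measure statement $\pi_m=\pi_{m_{\La_0}}\otimes\pi_{m_{\La_0^c}}$ that you use is a touch stronger than the integral factorization the paper records before Lemma \ref{prod}, though it is a standard property of Poisson measures; and your observation that $R(g,\cdot)\in L^1(\pi_m)$ is automatic (its integral being $\pi_{g^\ast m}(\Ga(\Q))=1$) is a clean justification the paper does not spell out. What the citation buys the paper is generality --- the \cite{LS03} criterion covers intensities where $\rho_g-1$ is merely integrable rather than compactly supported, in which case your finite product would have to be replaced by a more delicate infinite product --- while your localization buys a self-contained proof that produces the explicit density at no extra cost.
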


\begin{proof}
Note that density $\rho_g  = 1_{B_R^c(0) } + r_g^0,\;\; r_g^0\in \D(\Q,\R_+)$ of $g^\ast m$ w.r.t. $m$
may be equal zero on some part of $\Q$ and, therefore, the equivalence of of considered
Poisson measures is absent. Due to \cite{LS03}, the Radon-Nykodim derivative
$$
R(g,\ga)= \frac{d\pi_{g^\ast m}(\ga)}{d\pi_{ m} (\ga)}
$$
exists if
$$
\int_{\Q} |\rho_g(x)-1| m(dx)= \int_{B_R(0)} |1-r_g^0 (x)| m(dx) <\infty.
$$
\end{proof}

\begin{Remark}
As in the proof of Proposition 2.2 from \cite{AKR} we have an explicit formula for $R(g,\ga)$:

$$
R(g,\ga)= \prod_{x\in\ga} \rho_g (x) \exp(\int_{\Q} (1-\rho_g(x)) m(dx).
$$
The point-wise existence of this expression is obvious.

\end{Remark}

This fact gives us the possibility to apply the Vershik-Gelfand-Graev approach realized by these authors for the case
of diffeomorphism group.

Namely, for $F\in \PC$ or $F\in \PC(\Ga(\Q)$ and $g\in \Aff$ introduce operators
$$
(U_g F)(\ga) = (R(g^{-1} ,\ga) )^{1/2} (V_gF)(\ga).
$$

\begin{Theorem}

Operators   $U_g,\; g\in \Aff$ are unitary in $L^2 (\Ga(\Q), \pi_m)$ and give an irreducible representation
of $\Aff$.

\end{Theorem}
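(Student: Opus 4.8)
The plan is to establish two separate properties: first that each $U_g$ is unitary, and second that the representation is irreducible. For unitarity, I would start from the definition $(U_gF)(\ga)=(R(g^{-1},\ga))^{1/2}(V_gF)(\ga)$ and verify the homomorphism property $U_hU_g=U_{hg}$ together with $\lV U_gF\rV_{L^2(\pi_m)}=\lV F\rV_{L^2(\pi_m)}$. The norm identity should follow from Lemma \ref{V} applied with the square of the amplitude: one computes
$$
\int_{\Ga(\Q)}|U_gF|^2\,d\pi_m=\int_{\Ga(\Q)}R(g^{-1},\ga)\,|V_gF|^2\,d\pi_m,
$$
and since $R(g^{-1},\cdot)=\frac{d\pi_{(g^{-1})^\ast m}}{d\pi_m}$ converts integration against $\pi_m$ into integration against $\pi_{(g^{-1})^\ast m}$, and $V_g$ intertwines with the pushforward of the intensity measure, the change of variables $(g^{-1})^\ast(g^\ast m)=m$ from the computation after Lemma \ref{gm} collapses everything back to $\int|F|^2\,d\pi_m$. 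The homomorphism property combines the pointwise relation $V_hV_g=V_{hg}$ with a cocycle identity for $R$, namely $R(g^{-1}h^{-1},\ga)=R(h^{-1},\ga)\,R(g^{-1},h^{-1}\ga)$ read through the explicit product formula in the preceding Remark; surjectivity of each $U_g$ then follows from $U_gU_{g^{-1}}=U_e=\id$.

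The substantive part is irreducibility, and here I would follow the Vershik-Gelfand-Graev strategy adapted to this setting. The idea is to show that any bounded operator $A$ commuting with all $U_g$ must be a scalar. I would exploit two commuting families inside the representation: the multiplicative cylinder functions acting by pointwise multiplication, and the translation-type elements $g=(1,h1_B)$. The key structural input is Lemma \ref{prod}: for any two bounded cylinder functions $F_1,F_2$ there is a group element $g$ that asymptotically decorrelates them against the Poisson measure, i.e.
$$
\int_{\Ga(\Q)}F_1\,V_gF_2\,d\pi_m=\int_{\Ga(\Q)}F_1\,d\pi_m\int_{\Ga(\Q)}F_2\,d\pi_m.
$$
This is the mixing/ergodicity mechanism: the affine group moves disjoint localized observables far apart, and the product structure of the Poisson measure (its infinite divisibility, recorded just before Lemma \ref{prod}) makes them factorize. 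Ergodicity of $\pi_m$ under the relevant subgroup of transformations is precisely what forces the commutant to be trivial.

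Concretely I would argue that the cyclic vector is the constant function $\mathbf 1$, whose orbit under the $V_g$ (equivalently the $U_g$) spans a dense subspace of $L^2(\Ga(\Q),\pi_m)$; the exponential vectors $e^{\lu f,\ga\ru}$ used in the proof of Lemma \ref{V} are total, and applying $V_g$ to them produces $e^{\lu gf,\ga\ru}$, so controlling which $gf$ are reachable gives density. Then, given $A$ in the commutant, one tests $A$ against these exponential vectors and uses the decorrelation of Lemma \ref{prod} to show that the matrix coefficients of $A$ factor through the one-dimensional projection onto $\mathbf 1$, whence $A$ is scalar. The main obstacle I anticipate is exactly this ergodicity step: the group $\Aff$ does not act on the configuration space itself (as emphasized in the Remark following Lemma \ref{gm}, $g\ga$ need not be a configuration), so one cannot invoke a classical quasi-invariant ergodic action directly. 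Instead the argument must run entirely at the level of the operators $V_g$ and $U_g$ on cylinder functions, and the decorrelation lemma must be upgraded from single pairs $F_1,F_2$ to enough simultaneous control to conclude triviality of the commutant. Establishing that the amplitude factors $R(g^{-1},\ga)^{1/2}$ do not obstruct this factorization, i.e. that they too decorrelate in the relevant limit, will be the delicate point.
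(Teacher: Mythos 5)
Your unitarity argument coincides with the paper's: isometry via Lemma \ref{V} and the identity $(g^{-1})^\ast(g^\ast m)=m$ obtained after Lemma \ref{gm}, plus the group property of the $V_g$ and the cocycle property of $R$; this half is fine (you even make explicit the cocycle identity that the paper leaves implicit). The irreducibility half, however, contains a genuine gap, located exactly where you diverge from the paper. The paper does not attempt a direct commutant computation. Instead it (i) extends the operators $V_g$ to indicators $1_A$ of arbitrary $A\in\B(\Ga(\Q))$ by approximating $A$ in $\pi_m$-measure by cylinder sets $A_n$ and passing to the $L^2(\pi_m)$ and a.s.\ limit of $V_g 1_{A_n}$ --- a step forced by the fact that $\Aff$ does not act on $\Ga(\Q)$ itself, and one your outline never addresses; (ii) combines Lemma \ref{prod} with this approximation to produce, for any Borel sets $A_1,A_2$ of positive measure, some $g$ with $\int_{\Ga(\Q)} 1_{A_1}\,V_g 1_{A_2}\,d\pi_m\ge\frac12\,\pi_m(A_1)\,\pi_m(A_2)$, which is the inequality (\ref{ineq}); and (iii) concludes by contradiction that every invariant set is trivial, i.e.\ that $\pi_m$ is ergodic, after which the implication ``ergodicity implies irreducibility'' is quoted from Vershik--Gelfand--Graev \cite{VGG75} rather than reproved.

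Your substitute for this --- triviality of the commutant via a cyclic vector --- breaks down at its first step: the constant function $\mathbf 1$ is \emph{not} cyclic for $\{V_g\}$, since by the very definition of $V_g$ on cylinder functions $V_g\mathbf 1=\mathbf 1$ for every $g$, so the $V_g$-orbit of $\mathbf 1$ spans only the constants; and your parenthetical ``equivalently the $U_g$'' is not an equivalence at all, because the $U_g$-orbit of $\mathbf 1$ is the family of functions $R(g^{-1},\cdot)^{1/2}$, whose totality in $L^2(\pi_m)$ is a nontrivial claim --- comparable in difficulty to irreducibility itself --- that you do not prove and that does not follow from totality of the exponential vectors $e^{\lu f,\ga\ru}$, none of which lie in the orbit of $\mathbf 1$. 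Beyond this, the two steps you yourself flag as delicate --- upgrading the pairwise decorrelation of Lemma \ref{prod} to control of an arbitrary commutant element (in particular, showing such an element is a multiplication operator before any ergodicity can be applied), and showing that the factors $R(g^{-1},\cdot)^{1/2}$ do not spoil the factorization --- are precisely the content of the machinery in \cite{VGG75} that the paper invokes, and they remain open in your text. So the proposal proves unitarity but not irreducibility: in place of the paper's ergodicity argument it offers a program whose key intermediate claim is false as stated and whose remaining steps are acknowledged but not closed.
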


\begin{proof}
Let us check the isometry property  of these operators.
We have using Lemmas \ref{V}, \ref{gm}
$$
\int_{\Ga(\Q)} |U_g|^2 d\pi_m = \int_{\Ga(\Q)} |V_g F|^2(\ga) d\pi_{(g^{-1})^\ast m} (\ga)=
$$
$$
\int_{\Ga(\Q)} |F(\ga)|^2 d\pi_{(gg^{-1})\ast m}(\ga)= \int_{\Ga(\Q)} |F(\ga)|^2 d\pi_{ m}(\ga).
$$
From Lemma \ref{V} follows that $U_g^\ast = U_{g^{-1}}.$

We need only to check irreducibility that shall
follow from the ergodicity of Poisson measures \cite{VGG75}. But to this end we need first of all to define the action of
 the group $\Aff$ on sets from $\B(\Ga(\Q)$.  As we pointed out above, we can not define this
 action point-wisely. But we can define the action of     operators $V_g$ on the indicators $1_A(\ga)$ for
 $A\in \B(\Ga(Q))$. Namely, for given $A$ we take a sequence of cylinder sets $A_n, n\in \N$ such that
 $$
 \pi_{m}(A\Delta A_n) \to 0, n\to \infty.
 $$
 Then
 $$
 U_g 1_{A_n} =V_g 1_{A_n} (R(g^{-1} ,\cdot) )^{1/2}  \to G (R(g^{-1} ,\cdot) )^{1/2}  \in L^2(\pi_m), n\to\infty
 $$
 in $L^2(\pi_m)$. Each $V_g 1_{A_n} $ is an indicator of a cylinder set and
 $$
 V_g 1_{A_n}  \to G \;\; \pi_m - a.s., n\to \infty.
 $$
 Therefore,
 $G=1$ or $G=0$ $\pi_m$-a.s. We denote this function $V_g 1_A$.

 For the proof of the ergodicity of the measure $\pi_m$ w.r.t. $\Aff$ we need to show the following fact:
 for any $A\in \B(\Ga(\Q))$ such that $\forall g\in\Aff\;\; V_g 1_A = 1_A\; \pi_m- a.s.$ holds $\pi_m(A)= 0$
 or $\pi_m(A)= 1$.

 Fist of all, we will show that for any pair of sets $A_1, A_2 \in \B(\Ga(Q))$ with $\pi_m(A_1)>0,\;\;
 \pi_m(A_2) >0$ there exists $g\in\Aff$ such that
 \begin{equation}
 \label{ineq}
 \int_{\Ga(\Q)} 1_{A_1} V_g 1_{A_2} d\pi_m \geq \frac{1}{2} \pi_m(A_1) \pi_m(A_2).
 \end{equation}
 Because any Borel set may be approximated by cylinder sets, it is enough to show this fact
 for cylinder sets. But for such sets due to Lemma \ref{prod}  we can choose $g\in \Aff$ such that
$$
 \int_{\Ga(\Q)} 1_{A_1} V_g 1_{A_2} d\pi_m =  \pi_m(A_1) \pi_m(A_2).
 $$
Then using an approximation we will have (\ref{ineq}).

 To finish the proof of the ergodicity, we consider any $A\in\B(\Ga(\Q)$  such that
 $$
 \forall g\in \Aff\; V_g1_A = 1_A \;\;\pi_m - a.s.,\;\; \pi_m(A)>0.
 $$
 We will show that then $\pi_m(A)= 1$.  Assume $\pi_m(\Ga\setminus A) >0$.
 Due to the statement above, there exists $g\in \Aff$ such that
 $$
 \int_{\Ga(\Q)} 1_{\Ga\setminus A} V_g 1_A >0.
 $$
 But due to the invariance of $1_A$  it means
 $$
 \int_{\Ga(\Q)} 1_{\Ga\setminus A} 1_A d\pi_m >0
 $$
 that is impossible.
\end{proof}

\subsection*{Acknowledgement}

The work of the first-named
author was funded in part by the budget program of Ukraine No. 6541230
``Support to the development of priority research trends''. It was
also supported in part under the research work ``Markov evolutions
in real and p-adic spaces'' of the Dragomanov National
Pedagogical University of Ukraine.


\begin{thebibliography}{999}
\bibitem{AJO}
H. Airault, S. Jendoubi, and H. Ouerdiane, Unitarising measures for the representations of affine group and associated invariant operators. {\it Bull. Sci. Math.} {\bf 137} (2013), 775--790.
\bibitem{AKR}
S.~Albeverio, {Yu}.~G. Kondratiev, and M.~R{\"o}ckner. Analysis and geometry on configuration spaces. {\it J.~Funct.~Anal.}, {\bf 154} (1998), 444--500.
\bibitem{AK}
S. Albeverio and S. V. Kozyrev, Frames of $p$-adic wavelets and orbits of the affine group. {\it p-Adic Numbers Ultrametric Anal. Appl.} {\bf 1} (2009), no. 1, 18--33.
\bibitem{Ar}
R. Artzy, {\it Linear Geometry}, Addison-Wesley, Reading, 1965.
\bibitem{EH}
A. S. Elmabrok and O. Hutnik, Induced representations of the affine group and intertwining operators: I. Analytical approach.
{\it J. Phys. A} {\bf 45} (2012), no. 24, 244017, 15 pp.
\bibitem{GJ}
V. Gayral and D. Jondreville, Quantization of the affine group of a local field. {\it J. Fractal Geom.} {\bf 6} (2019), 157--204.
\bibitem{Is}
R.S. Ismagilov, {\it Representations of Infinite-Dimensional Groups}. Translations of Mathematical
Monographs 152, American Mathematical Society, Providence, RI, 1996.
\bibitem{Jo}
D. Jondreville, A locally compact quantum group arising from quantization of the affine group of a local field. {\it Lett. Math. Phys.} {\bf 109} (2019), 781--797.
\bibitem{Ko}
A. Kosyak, {\it Regular, Quasi-regular and Induced Representations of Infinite-Dimensional Groups}, European Mathematical Society, Z\"urich, 2018.
\bibitem{LS03}
M. A. Lifshits and E. Yu. Shmileva, Poisson measures that are quasi-invariant with respect to multiplicative transformations.  {\it Theory Probab. Appl.} {\bf 46} (2003), 652--666.
\bibitem{Ly}
R. Lyndon, {\it Groups and Geometry}, Cambridge University Press, 1985.
\bibitem{VGG75}
A. M. Vershik, I. M. Gel'fand, M. I. Graev, Representations of the group of diffeomorphisms, {\it Russian Math. Surveys}, {\bf 30}, no.6 (1975), 1--50.
\bibitem{VVZ}
V. S. Vladimirov, I. V. Volovich and E. I. Zelenov, {\it $p$-Adic Analysis and Mathematical Physics}, World Scientific, Singapore, 1994.
\bibitem{Ze}
A. M. Zeitlin, Unitary representations of a loop $ax+b$ group, Wiener measure and $\Gamma$ -function, {\it J. Funct. Anal.} {\bf 263} (2012), 529--548.
\end{thebibliography}
\end{document}